\date{}
\title{A note on constructive treatment of eigenvectors}
\author{P. Osinenko$^*$}
\author{G. Devadze}
\author{S. Streif}
\affil{Laboratory for Automatic Control and Systems Dynamics\\ Technische Universit{\"a}t Chemnitz }
\affil{$^*$\small{ pavel.osinenko@etit.tu-chemnitz.de} }
\newtheorem{lemma}{Lemma}
\newtheorem{theorem}{Theorem}
\newcommand{\lmultiset}{\{\!\vert}
\newcommand{\rmultiset}{\vert\!\}}
\newcommand{\eg}{e.\,g.\xspace}
\newcommand{\ie}{i.\,e.\xspace}
\providecommand{\keywords}[1]{\textbf{\textit{Index terms---}} #1}
\begin{document}

\maketitle

\begin{abstract}
The eigenvalue problem plays a central role in linear algebra and its applications in control and optimization methods. In particular, many matrix decompositions rely upon computation of eigenvalue-eigenvector pairs, such as diagonal or Jordan normal forms. Unfortunately, numerical algorithms computing eigenvectors are prone to errors. Due to uncomputability of eigenpairs, perturbation theory and various regularization techniques only help if the matrix at hand possesses certain properties such as the absence of non--zero singular values, or the presence of a distinguishable gap between the large and small singular values. Posing such a requirement might be restrictive in some practical applications. In this note, we propose an alternative treatment of eigenvectors which is approximate and constructive. In comparison to classical eigenvectors whose computation is often prone to numerical instability, a constructive treatment allows addressing the computational uncertainty in a controlled way. 

\end{abstract}

\keywords{Eigenvalues, eigenvectors, constructive analysis, approximate solutions, fundamental theorem of algebra}

\section{Introduction}

Let $\mathbf{A}$ be a complex-valued $n \times n$ matrix. Its characteristic polynomial is given as:
\begin{equation}
	P_A(\lambda) = \det \left( \mathbf{A} - \lambda \mathbf{I} \right). \label{eq:char-poly} 
\end{equation}

An eigenpair $\left( \mathbf{v}_j, \lambda_j \right)$ is a root $\lambda_j, j \in \{ 1, \ldots n \}$ of \eqref{eq:char-poly} and a vector $\mathbf{v}_j$ satisfying:
\begin{equation}
	\mathbf{A} \mathbf{v}_j = \lambda_j \mathbf{v}_j. \label{eq:eig-vec} 
\end{equation}

Computing eigenpairs is crucial for many control and optimization methods because these methods are often based on certain matrix decompositions such as diagonal normal form, Jordan form or singular value decompositions (SVD). For example, SVD is used in $H_2$, $H_{\infty}$ and $\mu$-optimal controllers by decoupling the original system into lower-dimensional ones, \eg \cite{Hovd1997-SVD-ctrl}, and requires computing left and right eigenvectors. SVD is also used in deriving reduced models and controllers which preserve important system properties such as closed-loop stability, observability or controllability \cite{Sou2010-SVD-stab, Antoulas2005-appr-dyn-sys}. In model predictive control, SVD of the cost function's Hessian is often used (see, \eg \cite{Rojas2004-SVD-horizon-ctrl}). System identification is another field of application of SVD. For instance, Zhang et al.\,\cite{Zhang1994-SVD-Kalman} used SVD on extended Kalman filter to cope with numerical ill-conditioning. Eigenvectors are used in computing Jordan normal form for stability analysis of linear systems \cite{Rohrs1992-lin-ctrl-sys}. For a survey on controller design based on matrix normal forms refer to \cite{Walker2003-controllers}.

Computing eigenvectors amounts to finding non--trivial solutions to the system of linear equations \eqref{eq:eig-vec}. The classical method of solving systems of linear equations is Gaussian elimination (see, \eg, \cite{Lay2012-lin-alg}).  In general, solving a system of linear equations $\mathbf{A}\mathbf{v} = \mathbf{b}$, where $\mathbf{A}$ is the coefficient matrix, $\mathbf{b}$ is the data vector, and $\mathbf{v}$ is an unknown vector, amounts to comparing real numbers to zero. In computations, real numbers are represented by computer programs or algorithms that compute the respective approximations. There is, however, no algorithm that can decide whether $\alpha = \beta$ or $\alpha \neq \beta$ for arbitrary real numbers $\alpha$ and $\beta$. Such an algorithm would be equivalent to solving the problem of deciding whether an arbitrary computer program terminates or not -- which is impossible as shown by Turing \cite{Turing1936-Halting-problem}.
When performing elementary matrix row operations, one might have to deal with numbers that are small relative to the machine precision without actually knowing whether they are actually zero or not.
In practice, small numbers are either set to zero or to machine epsilon. However, this may lead to a large deviation of an approximate solution from the true one \cite{Neumaier1998-ill-posed-problems}.

Besides the problem of approximating small numbers when solving $\mathbf{A}\mathbf{v} = \mathbf{b}$, a typical problem is that the coefficient matrix $\mathbf A$ is singular. A comprehensive analysis of these ill-posed problems was carried out within perturbation theory \cite{Chandrasekaran1991-pertur-thr,Tikhonov1995-regularization,Neumaier1998-ill-posed-problems,Hansen1998-ill-posed-problems,Engl1996-regulariz}. The techniques	that cope with solving such ill-posed problems approximately, mainly use regularization or truncated singular value decomposition (TSVD). In brief, if the coefficient matrix $\mathbf{A}$ of a problem $\mathbf{A}\mathbf{v} = \mathbf{b}$ is close to singular, it is substituted by some other matrix which is regular. For instance, Tikhonov regularization \cite{Tikhonov1995-regularization} suggests to use $\left( \mathbf{A} \mathbf{A}^* + h^2 \mathbf{I}\right)^{-1} \mathbf{A}^* \mathbf{b}$ as an approximate solution to $\mathbf{A}\mathbf{v} = \mathbf{b}$. In TSVD, $\mathbf{A}$ is decomposed into $\mathbf{U} \mathbf{\Sigma} \mathbf{V}^*$, where $\mathbf{U}$ and $\mathbf{V}^*$ are unitary matrices, and $\mathbf{\Sigma}$ is a diagonal matrix of singular values $\{ \sigma_j \}_j$ of $\mathbf{A}$, \ie, square roots of the eigenvalues of $\mathbf{A}^*\mathbf{A}$. The singular values, which are small relative to the given threshold, are set to zero (``truncated''). The resulting approximate solution to $\mathbf{A}\mathbf{v} = \mathbf{b}$ may be found as follows:
\begin{equation}
	\hat{\mathbf{v}} = \sum_{j \in I_1} \frac{ \mathbf{u}_j^{T} \left( \mathbf{A}^T \mathbf{b} \right) }{ \sigma_j } \mathbf{v}_j +\sum_{  j \in I_2 } t_j \mathbf{v}_j, \label{eq:sol-by-TSVD} 
\end{equation}
where $\mathbf{u}_{j}, \mathbf{v}_{j}$ are the $j$--th column of $\mathbf{U}$ and $\mathbf{V}$, respectively, with index sets $I_1 \coloneqq \{ j : \sigma_j \neq 0 \}$ and $I_2 \coloneqq \{ j : \sigma_j = 0 \}$, and $t_j$ are free variables associated with those singular values that have been truncated.

In practice, such regularization techniques work well only when there is a distinguishable gap between the singular values that are large (\ie\,contained in $I_1$) and those which are near zero (\ie\,contained in $I_2$) relative to the machine precision. In general, any approximate solution of this kind may yield large approximation errors. The bound on a solution is proportional either to the so--called condition number $\| \mathbf{A} ^{-1} \| \| \mathbf{A} \| $, or, equivalently, to the reciprocal of the smallest singular value of $\mathbf{A}$, or to the said gap between singular values. 
Here 
%
	$\| \mathbf{A} \| \triangleq \sup \{ \| \mathbf{A} \mathbf{v} \|: \mathbf{v} \in \mathbb{C}^n \land \| \mathbf{v} \| = 1 \}$ 
denotes the operator norm of $\mathbf{A}$ and $\| \cdot \|$ is the Euclidean vector norm.
Then, if $\mathbf{A}$ has actually a zero singular value or if the said gap is close to the machine epsilon, regularization techniques may yield poor approximate solutions \cite{Neumaier1998-ill-posed-problems}.

When looking numerically for non--trivial solutions to $\mathbf{A}\mathbf{v}=\mathbf{0}$, one often deals with some approximation $\hat{\mathbf{A}}$ to $\mathbf{A}$. Let $\hat{\mathbf{A}} = \mathbf{A} + \mathbf{E}$, where $\mathbf{E}$ is some perturbation matrix. Eigenvectors of $\hat{\mathbf{A}}$ then satisfy:
\begin{equation}
	\hat{\mathbf{v}}_j = \mathbf{v}_j  + \sum_{\substack{k=1 \\ k \neq j}} \frac{ \mathbf{v}_k^T \mathbf{E} \mathbf{v}_k }{ \lambda_j - \lambda_k } \mathbf{v}_j, \label{eq:perturbed-eig-vec} 
\end{equation}
where $\{ \lambda_j \}_j$ and $\{ \mathbf{v}_j \}_j$ are the original eigenvalues and eigenvectors. For details and numerical algorithms, refer to \cite{Trefethen1997-num-lin-alg}. Some other algorithms, such as approximation of eigenvectors by least squares \cite{Hecker2007-appr-eig-vec-LS}, rely on the knowledge of the first non--zero singular value of $\mathbf{A}$. Therefore, solving systems of linear equations $\mathbf{A}\mathbf{v} = \mathbf{b}$, and finding eigenvectors in particular, requires certain properties of the original problem to be satisfied which might be restrictive in some applications.

To approach the problem from a computational point of view, linear algebra was also addressed in computable analysis as developed by Weihrauch \cite{Weihrauch2012-comp-analys}. Computable analysis investigates mathematical objects that can be computed in principle, \ie, by an algorithm that is guaranteed to yield a correct answer. Brattka et al. \cite{Brattka2002-comp-lin-eqns} investigated the problem of computability of solutions to a problem $\mathbf{A}\mathbf{v} = \mathbf{b}$. They concluded that the solutions are computable provided that the rank of $\mathbf{A}$ is known in advance. In general, however, the rank is uncomputable \cite{Ziegler2000-comp-dim-lin-subspc}. 
There  also exist constructive frameworks of algebra. For a comprehensive course, please refer to a canonical book on the subject by Mines et al. \cite{Mines1988-constr-alg}. The eigenvalue problem was investigated in constructive mathematics for particular linear operator decompositions. For example, Ye \cite{Ye2011-SF} investigated the spectral decomposition using constructive functional analysis. He used an assumption that all the eigenvalues be given with their multiplicities known beforehand. With this assumption, he was able to exactly reconstruct the spectral theorem.

In this note, we are neither concerned with \textit{exact} eigenvectors, nor do we seek for new bounds on approximate solutions in the spirit of perturbation theory. Instead, we suggest to treat eigenvectors from the standpoint of constructive mathematics, as developed by Bishop et al. \cite{Bishop1985-constr-analysis}. The advantage of this approach is that the derived proofs may be translated into effective algorithms that always output a correct result \cite{Bishop1985-constr-analysis}. This, in our view, is especially important in applications such as control engineering. We do not look for solutions to \eqref{eq:eig-vec}, but propose to use the following notion of an approximate eigenvector:
\begin{equation}
	\| \mathbf{A} \hat{\mathbf{v}} - \hat{\lambda}_j \hat{\mathbf{v}} \| \leq \varepsilon, \label{eq:eps-eig-vec} 
\end{equation}
where $\varepsilon$ is the predefined accuracy, $\hat{\mathbf{v}}$ is an $\varepsilon$--eigenvector with $\| \hat{\mathbf{v}} \| = 1$, and $\hat{\lambda}_j$ is an approximate eigenvalue of $\mathbf{A}$ in the sense that $\det ( \mathbf{A} - \hat{\lambda}_j \mathbf{I} ) \le \varepsilon$. We constructively prove existence of $\varepsilon$--eigenvectors using the algebraic closure of rational complex numbers and continuity of roots of polynomials. The results derived in the present note do not require a prior knowledge of constructive functional analysis or measure theory. First, we discuss certain important notions of the constructive analysis in the preliminaries section, then we  constructively verify that roots of polynomials are continuous in coefficients, and finally we prove existence of $\varepsilon$--eigenvectors.
We conclude by pointing out the importance and usages of $\varepsilon$--eigenvectors in control and optimization.

\section{Preliminaries}

The aim of this section is to briefly introduce certain key notions of constructive mathematics that will be needed in this note. For a comprehensive description, refer to \cite{Bishop1985-constr-analysis,Bridges2007-techniques,Bridges1987-varieties}. The central notion is an \textbf{operation} which is an algorithm that produces a unique result in a finite number of steps for each input from its domain. For example, a \textbf{real} number $x$ is a \emph{regular} Cauchy sequence of rational numbers in the sense that 
\begin{equation*}
	\forall n, m \in \mathbb{N} \left[ |x(n)-x(m)|\leq\frac{1}{n}+\frac{1}{m} \right]
\end{equation*}
where $x(n)$ is an \emph{operation} that produces the $n-$th rational approximation to $x$. A \textbf{set} is a pair of operations: $\in$ determines that a given object is a member of the set, and $=$ determines whether two given set members are equal. Existence and universal quantifiers are interpreted as follows: $\exists x\in A \left[ \varphi (x) \right]$ means that an operation was derived that constructs an instance $x$ along with a proof of $x\in A$ and a proof of the logical formula $\varphi (x)$ as \emph{witnesses}; $\forall x\in A \left[ \varphi (x) \right]$ means that an operation was derived that proves $\varphi (x)$ for any $x$ provided with a witness for $x\in A$. A \textbf{multiset} is a finite collection of objects that allows repeated elements. We denote multisets by $\lmultiset \cdot \rmultiset$. For example $\lmultiset 1, 1 \rmultiset$ is a multiset. A function $f:[a ,b] \rightarrow \mathbb{R}$ is \textbf{uniformly continuous} if there exists an operation $\omega: \mathbb{Q} \rightarrow \mathbb{Q}$ called \textbf{modulus of continuity} such that $\forall \varepsilon \in \mathbb{Q} \forall x, y \in [a, b] \left[ |x - y| \le \omega(\varepsilon) \implies |f(x) - f(y)| \le \varepsilon \right]$.

A \textbf{complex number} $x + y i$  is identified with a pair $( x , y )$ of real numbers. The set of complex numbers $\mathbb{C}$ is a metric space with the metric $\rho(z, w) \triangleq |z - w| $. A \textbf{sphere} centered at $z$ with the radius $r>0$ is denoted by $\mathcal{S}(z, r)$. Let $K$ respectively $U$ be a compact respectively any set in $\mathcal{C}$ respectively, and let $r > 0$.  We define
\begin{equation*}
 K_r \triangleq \{z  \in \mathbb{C}: \rho(z,K) \le r\} 
\end{equation*}

The set $K$ is well-contained in $U$ if there exists $r > 0$ such that $K_r \subseteq U$. A \textbf{path} $\gamma$ is a uniformly continuous function $\gamma: [a,b] \rightarrow  \mathbb{C}$ such that:
\begin{equation*}
	\exists t_0, \ldots t_n, n > 0 \left[ t_0 = a \le t_1 \le \dots \le t_n \le b \right],
\end{equation*}
and $\gamma$ is differentiable on each interval $[t_i, t_{i+1}] , i=0, \dots , n-1$. A path is closed if $\gamma(a) = \gamma(b)$. By $\text{car}(\gamma)$, we denote the closure of the range of $\gamma$. Notice that $\text{car}(\gamma)$ is compact and $\rho(z, \text{car}(\gamma))$ exists for any $z$ since $\gamma$ is uniformly continuous. If a function $f$ is differentiable on $\text{car}(\gamma)$ and $\gamma$ is closed, then
\begin{equation*}
	\int_{\gamma} f' dz =0.
\end{equation*}

If $\gamma$ is a closed path, $z_0 \in \mathbb{C}$, and $ \rho(z_0, \text{car}(\gamma) ) > 0 $ then:
\begin{equation*}
	\int_{\gamma} \cfrac{dz}{z-z_0} dz = 2 \pi i \mathcal{W}(\gamma, z_0).
\end{equation*}
for some integer $\mathcal{W}(\gamma, z_0)$ called the \textbf{winding number} of $\gamma$ around $z_0$. We will denote the number of zeros of a function $f$ within a closed path $\gamma$ by $\mathcal{N}(f, \gamma)$. The \textbf{length} of a path $\gamma$ is denoted by $\text{len } \gamma$ and defined by $\int_{\gamma} 1 dz$. A function $P(z) = \sum_{k=0}^{n} a_k z^k $ is a \textbf{polynomial} of degree $n$ if $a_n \ne 0$. It is \textbf{monic} if $a_n=1$. A continuous function $f$ on an open set $U$ is \textbf{analytic} on $U$, if $ \int_{\gamma} f dz = 0 $ for any triangular path $\gamma$ going through some points $z_1, z_2, z_3$ such that the convex hull of $z_1, z_2, z_3$ is a subset of $U$. In the next section, we present necessary lemmas, required for the main theorem.

\section{Technical lemmas}
In this section, we show that continuity of roots of polynomials admits a constructive proof. Most of the classical derivations are easily transferable into the constructive setup.

Let $P(z)$ be a monic polynomial of degree $n$ over $\mathbb{C}$. By the constructive Fundamental Theorem of Algebra (FTA) \cite[p.~156]{Bishop1985-constr-analysis}, it has $n$ roots. Therefore,
\begin{equation*}
	P(z) = z^n + \sum_{k=0}^{n-1}a_k z^k = \prod_{k=1}^{n} (z - z_k)
\end{equation*}

Let $\hat{P}(z)$ be some monic polynomial of degree $n$ whose coefficients $\hat{a}_k$ approximate those of $P(z)$ in the sense that $\forall k \left[ |a_k - \hat{a}_k | \le \varepsilon \right]$. The polynomial $\hat{P}(z)$ may be decomposed as well:
\begin{equation}
	\hat{P}(z) = z^n + \sum_{k=0}^{n-1}\hat{a}_kz^k = \prod_{k=1}^{n} (z - \hat{z}_k)
\end{equation}

We look for a bound $\delta$, that depends on $\varepsilon$ such that $\forall k \left[ |z_k - \hat{z}_k| \le \delta(\varepsilon) \right]$. That is, if the coefficients of $\hat{P}(z)$ are close to the coefficients of $P(z)$, then the respective roots are close as well. To show this, we require several technical lemmas. The first is:
\begin{lemma} \label{lemma-arg-principle-poly}	
	Let $P(z)$ be a polynomial of degree $n$, and let $\bar{\mathcal{B}}$ be some closed ball well-contained in an open set $U \subset \mathbb{C}$ with the boundary $\mathcal{S}$. Let $\gamma$ denote a closed path such that $\text{ car }(\gamma) = \mathcal{S}$. Assume that $P(z)$ has no roots on  $\text{ car }(\gamma)$. It follows that:
	\begin{equation*}
		\frac{1}{2 \pi i} \oint_{\gamma} \frac{P'(z)}{P(z)} dz = \mathcal{N}(P, \gamma)
	\end{equation*}	
\end{lemma}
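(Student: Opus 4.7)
The plan is to reduce the argument principle for $P$ to the winding-number formula already recorded in the preliminaries, applied once to each root. First I would invoke the constructive Fundamental Theorem of Algebra to factor $P(z) = a_n \prod_{k=1}^{n}(z-z_k)$ with $a_n \neq 0$ and roots $z_1,\dots,z_n$ listed with multiplicity. Let $c_0$ denote the center and $r$ the radius of $\bar{\mathcal{B}}$. The factorization supplies each $z_k$ as a concrete complex number together with rational approximations to $|z_k-c_0|$, which are needed at the classification step below.

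Next I would form the logarithmic derivative
\begin{equation*}
\frac{P'(z)}{P(z)} \;=\; \sum_{k=1}^{n} \frac{1}{z-z_k},
\end{equation*}
valid pointwise away from the roots. The hypothesis that $P$ has no zeros on $\text{car}(\gamma)$ is understood constructively as $\rho(z_k, \text{car}(\gamma)) > 0$ for each $k$; by compactness of $\text{car}(\gamma)$ this yields a uniform lower bound on $|P(z)|$ along the path, so $P'/P$ is uniformly continuous there and integrable along $\gamma$. Linearity of the path integral then gives
\begin{equation*}
\frac{1}{2\pi i}\oint_{\gamma}\frac{P'(z)}{P(z)}\,dz \;=\; \sum_{k=1}^{n}\frac{1}{2\pi i}\oint_{\gamma}\frac{dz}{z-z_k}.
\end{equation*}

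Applying the winding-number identity from the preliminaries to each summand produces $\sum_{k}\mathcal{W}(\gamma, z_k)$. It remains to identify this sum with $\mathcal{N}(P,\gamma)$. Since $\text{car}(\gamma)=\mathcal{S}$ is the boundary circle of $\bar{\mathcal{B}}$, and $\rho(z_k,\mathcal{S}) > 0$ forces $\bigl||z_k-c_0|-r\bigr| > 0$, one can constructively decide in finite time whether $|z_k-c_0| < r$ or $|z_k-c_0| > r$. Roots in the interior contribute winding number $1$ and roots in the exterior contribute $0$, so the sum collapses to the count of roots inside $\bar{\mathcal{B}}$, which is $\mathcal{N}(P,\gamma)$ by definition.

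The main obstacle I anticipate is precisely the constructive dichotomy \emph{inside versus outside} for each root: classically this is automatic from $z_k\notin\mathcal{S}$, but constructively one needs an explicit positive separation. The no-zeros-on-$\mathcal{S}$ hypothesis is exactly what supplies the required rational witness $\eta_k > 0$ with $\bigl||z_k-c_0|-r\bigr| > \eta_k$, and without it the decision procedure would stall. A secondary but routine point is to verify that the winding number of the circular parametrization $\gamma$ around an arbitrary interior point equals $1$; this should follow from a direct computation with the standard parametrization of $\mathcal{S}$ and the definition of $\mathcal{W}$ recorded earlier.
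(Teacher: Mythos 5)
Your proposal follows essentially the same route as the paper: factor $P$ via the constructive FTA, express $P'/P$ as a sum of simple poles $\sum_k (z-z_k)^{-1}$, integrate term by term using the winding-number identity from the preliminaries, and then use the positive separation of each root from $\mathcal{S}$ to decide inside-versus-outside. The paper phrases the partial-fraction step as a finite induction (peeling off one root at a time with $P=(z-z_1)P_1$) rather than writing the full logarithmic-derivative sum at once, and it leaves the inside/outside decidability implicit, which you spell out; but these are presentational differences, not a different proof.
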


\begin{proof}
    By the FTA, $P(z)$ has a multiset of $n$ roots $\lmultiset z_1, \ldots z_n \rmultiset$. Without loss of generality, we may assume that $P(z)$ is monic. Then, it can be expressed as $P(z) = (z - z_1) P_1(z)$ where $P_1(z) = \prod_{k=2}^n(z - z_k)$. Therefore, $P'(z) = P_1(z) + (z - z_1)P'_1(z)$. It follows that:
    \begin{equation*}
    \begin{split}
        \frac{1}{2 \pi i} \oint_{\gamma} \cfrac{P'(z)}{P(z)} dz & = \frac{1}{2 \pi i} \oint_{\gamma} \left( \cfrac{1}{(z-z_1)} + \cfrac{P'_1(z)}{P_1} \right) dz \\ 
        & = \frac{1}{2 \pi i} \oint_{\gamma} \cfrac{1}{(z-z_1)} + \frac{1}{2 \pi i} \oint_{\gamma} \cfrac{P'_1(z)}{P_1} dz.
    \end{split}
    \end{equation*}    
    
    The first integral equals $\mathcal{W}(\gamma, z_1) = 1$ if $z_1$ is in $\bar{\mathcal{B}}$, and $0$ otherwise. Now, proceeding by finite induction, we obtain the result.
\end{proof}

The next lemma shows that a parametric integral over a path is uniformly continuous.
\begin{lemma} \label{par-integral-cont}
	Let $f(t,z)$ be a uniformly continuous function on $[t_0, t_1] \times U$, where $t_0 < t_1, t_0, t_1 \in \mathbb{R}$, and $U$ is an open set in $\mathbb{C}$. Let $\gamma: [0, 1] \rightarrow \mathbb{C}$ be a path in $U$. Define $I(t) := \int_{\gamma} f(t,z) dz $. It follows that $I(t)$ is a uniformly continuous function of $t$.
\end{lemma}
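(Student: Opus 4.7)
The plan is the classical estimate: bound $|I(t) - I(s)|$ by $\mathrm{len}(\gamma)$ times the uniform deviation of $f(t,\cdot)$ from $f(s,\cdot)$ on $\mathrm{car}(\gamma)$, and then extract the required modulus from the joint modulus of continuity of $f$.

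First I would record what the hypothesis gives constructively. Uniform continuity of $f$ on $[t_0,t_1] \times U$ means we have an operation $\omega_f : \mathbb{Q}_{>0} \to \mathbb{Q}_{>0}$ such that whenever $(t,z),(s,w) \in [t_0,t_1] \times U$ satisfy $|t-s| + |z-w| \le \omega_f(\eta)$, one has $|f(t,z) - f(s,w)| \le \eta$. Since $\gamma$ is a path (hence uniformly continuous and piecewise differentiable) the length $L := \mathrm{len}(\gamma) = \int_\gamma 1\,dz$ exists, and from the piecewise differentiability we may compute a rational upper bound $L_0 \ge L$ by summing any rational upper bound on $|\gamma'|$ over the pieces; this $L_0$ plays the role of the length constant in what follows.

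Next I would propose the modulus
\begin{equation*}
\omega_I(\varepsilon) \;:=\; \omega_f\!\left(\frac{\varepsilon}{L_0 + 1}\right).
\end{equation*}
Assume $t, s \in [t_0,t_1]$ with $|t-s| \le \omega_I(\varepsilon)$. For each $z \in \mathrm{car}(\gamma) \subset U$, the pair $((t,z),(s,z))$ satisfies $|t-s| + |z-z| \le \omega_f(\varepsilon/(L_0+1))$, so
\begin{equation*}
|f(t,z) - f(s,z)| \;\le\; \frac{\varepsilon}{L_0+1}.
\end{equation*}
Applying the standard contour bound $\left|\int_\gamma g\,dz\right| \le \mathrm{len}(\gamma)\cdot \sup_{z \in \mathrm{car}(\gamma)} |g(z)|$ (valid constructively since $\gamma$ is piecewise differentiable and $g(z) := f(t,z) - f(s,z)$ is uniformly continuous on the compact $\mathrm{car}(\gamma)$, so its sup exists), we obtain
\begin{equation*}
|I(t) - I(s)| \;=\; \left| \int_\gamma \bigl(f(t,z) - f(s,z)\bigr)\,dz \right| \;\le\; L_0 \cdot \frac{\varepsilon}{L_0+1} \;\le\; \varepsilon,
\end{equation*}
which exhibits $\omega_I$ as a modulus of uniform continuity for $I$.

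I expect no serious obstacle here; the proof is essentially a bookkeeping exercise. The one subtlety worth being explicit about is constructive: we cannot in general take $L_0 = L$ because $L$ need not be rational, so I use the rational overestimate $L_0$ in the denominator to keep $\omega_I$ rational-valued as required by the definition of modulus of continuity. Aside from this, the argument uses only (i) the joint modulus $\omega_f$, (ii) compactness of $\mathrm{car}(\gamma)$ which ensures the supremum above is attained constructively, and (iii) the elementary $ML$-bound on contour integrals, all of which are available from the preliminaries.
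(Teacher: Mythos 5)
Your proof is essentially the same as the paper's: bound $|I(\tau_1)-I(\tau_2)|$ by $\mathrm{len}(\gamma)$ times the pointwise deviation $|f(\tau_1,z)-f(\tau_2,z)|$, and obtain the modulus by rescaling $\omega_f$ by the length. Your use of a rational overestimate $L_0$ of the length is a small but genuine technical refinement: the paper sets $\omega_I(\varepsilon) := \omega_f\bigl(\varepsilon/\mathrm{len}(\gamma)\bigr)$, which need not be rational-valued as the paper's own definition of modulus of continuity requires.
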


\begin{proof}
    Let $\tau_1, \tau_2, \tau_1 \le \tau_2$ be two points in $[t_0, t_1]$. First, observe that:
    \begin{equation*}
        | I(\tau_1) - I(\tau_2) | = \left | \int_{\gamma} ( f(\tau_1, z) - f(\tau_2, z) ) dz \right | \le \int_{\gamma} \left |  f(\tau_1, z) - f(\tau_2, z) \right | dz. 
    \end{equation*}
    
    Let $\omega_f$ be the modulus of continuity of $f$. It follows that if $ | \tau_1 - \tau_2 | \le \omega_f( \varepsilon )$ for some $\varepsilon > 0$, then $ |f(\tau_1, z) - f(\tau_2, z)| \le \varepsilon $. Therefore,
    \begin{equation*}
        \int_{\gamma} \left |  f(\tau_1, z) - f(\tau_2, z) \right | dz \le \int_{\gamma} \varepsilon dz = \varepsilon \mbox{len}(\gamma). 
    \end{equation*}    
    
    The result follows by setting $\omega_I(\varepsilon) := \omega_f \left( \frac{ \varepsilon } { \mbox{len}(\gamma) } \right)$.
\end{proof}

We require a variant of the Rouche's Theorem \cite[p.~131]{Beardon1979-compl-analys} for polynomials:
\begin{lemma} \label{lemma-Rouche-poly}
    Let $P(z)$ and $Q(z)$ be polynomials of degree $n$, and let $\bar{\mathcal{B}}$ be some closed ball in $\mathbb{C}$ with the boundary $\mathcal{S}$. Let $\gamma$ denote a closed path such that $\text{ car }(\gamma) = \mathcal{S}$. Assume that $P(z)$ has no roots on  $\text{ car }(\gamma)$. If $|Q(z)| < |P(z)|$ for all $z$ in $\bar{\mathcal{B}}$, then $ \mathcal{N} (P, \mathcal{S}) = \mathcal{N} (P + Q, \mathcal{S}) $.
\end{lemma}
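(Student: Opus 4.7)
The plan is to argue by a homotopy between $P$ and $P+Q$. For $t\in[0,1]$, set $R_t(z) \triangleq P(z) + t Q(z)$ and define
\begin{equation*}
N(t) \triangleq \frac{1}{2\pi i}\oint_{\gamma} \frac{R_t'(z)}{R_t(z)}\, dz.
\end{equation*}
Provided the integrand is well defined on $\text{car}(\gamma)$ for every $t\in[0,1]$, Lemma~\ref{lemma-arg-principle-poly} applied to the polynomial $R_t$ identifies $N(t)$ with $\mathcal{N}(R_t, \gamma)$. In particular, $N(0)=\mathcal{N}(P,\gamma)$ and $N(1)=\mathcal{N}(P+Q,\gamma)$. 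If I can show that $N$ is a uniformly continuous function of $t$ taking only integer values, then it must be constant, which delivers the conclusion.

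First I would verify the non-vanishing of $R_t$ on $\text{car}(\gamma)$. From the hypothesis $|Q(z)|<|P(z)|$ on $\bar{\mathcal{B}}$, and in particular on the compact set $\text{car}(\gamma)=\mathcal{S}$, the continuous function $|P(z)|-|Q(z)|$ is bounded below by a positive constant $\delta>0$ on $\text{car}(\gamma)$. Then, for any $t\in[0,1]$ and any $z\in\text{car}(\gamma)$,
\begin{equation*}
|R_t(z)| \;\ge\; |P(z)| - t|Q(z)| \;\ge\; |P(z)| - |Q(z)| \;\ge\; \delta.
\end{equation*}
This uniform lower bound, together with the polynomial (hence uniformly continuous on compacts) nature of $R_t$ and $R_t'$ in both variables $(t,z)$, makes $f(t,z) \triangleq R_t'(z)/R_t(z)$ uniformly continuous on $[0,1]\times\text{car}(\gamma)$. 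Lemma~\ref{par-integral-cont} then yields that $N(t)$ is uniformly continuous on $[0,1]$.

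Finally, for each fixed $t$, the hypotheses of Lemma~\ref{lemma-arg-principle-poly} are met for $R_t$, so $N(t)=\mathcal{N}(R_t,\gamma)\in\mathbb{Z}$. It remains to conclude that a uniformly continuous, integer-valued function on $[0,1]$ is constant, in a constructive manner: using the modulus of continuity $\omega_N$ of $N$, one partitions $[0,1]$ into finitely many subintervals of length at most $\omega_N(1/2)$; on each subinterval the values of $N$ differ by less than $1/2$, and being integers they must coincide, so $N$ is constant by finite induction. Hence $N(0)=N(1)$, i.e.\ $\mathcal{N}(P,\mathcal{S})=\mathcal{N}(P+Q,\mathcal{S})$.

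The main obstacle I expect is the last step: classically ``continuous integer-valued implies constant'' is immediate from connectedness, but constructively one has to extract an explicit modulus of continuity for $N$ and use it to drive a finite induction, avoiding any appeal to the intermediate value theorem or to trichotomy on the real line. A secondary care point is producing a \emph{witness} for the positive lower bound $\delta$ on $|P|-|Q|$ over $\text{car}(\gamma)$; this should follow from uniform continuity of $|P|-|Q|$ on the compact $\text{car}(\gamma)$ together with the strict inequality supplied pointwise by the hypothesis.
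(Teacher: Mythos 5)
Your argument is essentially the paper's own proof: the same homotopy $R_t = P + tQ$, the identification $N(t)=\mathcal{N}(R_t,\gamma)$ via Lemma~\ref{lemma-arg-principle-poly}, uniform continuity of $N$ from Lemma~\ref{par-integral-cont}, and the conclusion that a uniformly continuous integer-valued function on $[0,1]$ is constant. One caution on the point you flagged as secondary: constructively, pointwise strict positivity of a uniformly continuous function on a compact set does \emph{not} in general yield a positive infimum, so the lower bound $\delta>0$ for $|P|-|Q|$ on $\text{car}(\gamma)$ does not simply ``follow from uniform continuity together with the pointwise strict inequality''---the hypothesis of the lemma must effectively be read as supplying a uniform gap (which is how it is invoked in Lemma~\ref{lemma-continuity-rts}, where the gap is the explicit quantity $s_j - \delta p_j$), and the paper's own proof is equally terse on this point.
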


\begin{proof}
    Define the following function: 
    \begin{equation*}
        I(t):= \frac{1}{2 \pi i} \oint_{\gamma} \cfrac{ P'(z) + t Q'(z) }{ P(z) + t Q(z) }, t \in [0, 1].
    \end{equation*}
    Since $P(z)$ has no roots on  $\text{ car }(\gamma)$, $P(z) + t Q(z)$ also has no roots on $\text{ car }(\gamma)$. By Lemma \ref{lemma-arg-principle-poly}, $I(t) = \mathcal{N} (P + t Q, \mathcal{S}) \in \mathbb{N}$. The condition $|Q(z)| < |P(z)|$ on $\bar{\mathcal{B}}$ implies that for any $z$ in $\bar{\mathcal{B}}$, there exists a positive rational complex number $p$ such that:
    \begin{equation*}
        |Q(z)| < p < |P(z)|.
    \end{equation*}
    
    By the density of rational complex numbers in $\mathbb{C}$, there exists also a positive $q$ such that the following inequality holds:
    \begin{equation*}
        |Q(z)| < p < q < |P(z)|.
    \end{equation*}    
    
    By the triangle inequality, we have:
    \begin{equation*}
        |P(z) + t Q(z)| > |P(z)| - t |Q(z)| > |P(z)| - |Q(z)| > q - p.
    \end{equation*}     
    It follows that $\frac{1} { |P(z) + t Q(z)| } $ is analytic on $\text{car}(\gamma)$. Hence, $\frac{P'(z) + t Q'(z) }{ P(z) + t Q(z) }$ is analytic on $\text{car}(\gamma)$ as well. By Lemma \ref{par-integral-cont}, $I(t)$ is uniformly continuous. But since $I(t)$ is an integer for all $t$, $I(t)$ is a constant function. Therefore, $I(0) = I(1)$ which means that $ \mathcal{N} (P, \mathcal{S}) = \mathcal{N} (P + Q, \mathcal{S}) $.
\end{proof}

We are ready to prove the following lemma:
\begin{lemma} \label{lemma-continuity-rts}
	Let $ P(z) = z^n + \sum_{k=0}^{n-1}a_k z^k $ be a monic polynomial. For any $\varepsilon > 0$, there exists a monic polynomial $\hat{P}(z) = z^n + \sum_{k=0}^{n-1} \hat{a}_k z^k$ and $\delta > 0$, that depends on $\varepsilon$, such that $\forall k \left[ |z_k - \hat{z}_k | \le \varepsilon \right]$, where $z_k, \hat{z}_k$ are a root of $P(z)$ and a root of $\hat{P}(z)$ respectively, and $\forall k \left[ |a_k - \hat{a}_k | \le \delta( \varepsilon ) \right]$.
\end{lemma}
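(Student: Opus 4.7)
The plan is to imitate the classical continuity-of-roots argument, absorbing the Rouché step into Lemma \ref{lemma-Rouche-poly}. First I would localize all roots: the standard estimate that every root of a monic polynomial $z^n + \sum b_k z^k$ satisfies $|z| \le 1 + \max_k|b_k|$ gives, upon setting $R := 2 + \max_k|a_k|$ and imposing $\delta \le 1$, that the roots of both $P$ and of every admissible $\hat P$ lie in the open disk $\{|z| < R\}$.

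Next I would exploit the factorization $P(z) = \prod_{k=1}^{n}(z - z_k)$ furnished by the constructive FTA to obtain a lower bound for $|P|$ off the balls $B(z_k,\varepsilon)$: whenever $|z-z_k| \ge \varepsilon$ for every $k$, one has $|P(z)| \ge \varepsilon^n$. On the other hand, for $|z| \le R$ the difference satisfies $|\hat P(z)-P(z)| \le \delta\sum_{k=0}^{n-1}R^k \le n R^{n-1}\delta$. Choosing $\delta(\varepsilon) := \min\bigl(1,\ \varepsilon^n/(2nR^{n-1})\bigr)$ then forces the strict inequality $|\hat P - P| < |P|$ on all of $\bar B(0,R)\setminus\bigcup_k B(z_k,\varepsilon)$. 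Since the roots of $\hat P$ were already confined to $\bar B(0,R)$, every root of $\hat P$ must lie within $\varepsilon$ of some $z_k$.

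To produce the indexed pairing demanded by the statement, I would apply Lemma \ref{lemma-Rouche-poly} with $Q := \hat P - P$ on circles $\partial B(z_k,\varepsilon')$, where the rational radius $\varepsilon' \in (\varepsilon/2,\varepsilon]$ is picked so that the circle is root-free for $P$ (only finitely many radii are forbidden, and one such $\varepsilon'$ is constructively locatable by sampling rationals). The estimate above still delivers $|Q|<|P|$ on such a circle, whence $\mathcal{N}(\hat P,\partial B(z_k,\varepsilon')) = \mathcal{N}(P,\partial B(z_k,\varepsilon'))$; matching these counts with multiplicity yields the required $k\mapsto \hat z_k$ assignment with $|z_k-\hat z_k|\le\varepsilon$.

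The principal obstacle is constructive: one cannot in general decide whether two apparently distinct roots $z_j,z_k$ actually coincide, so one cannot a priori partition $\bigcup_k B(z_k,\varepsilon)$ into disjoint balls around the truly distinct roots. The remedy is to argue throughout in terms of the multiset of roots, i.e.\ to rely on the uniform lower bound $|P(z)|\ge\varepsilon^n$ derived from the product formula (which is blind to coincidences) and to let Lemma \ref{lemma-Rouche-poly} handle the bookkeeping of multiplicities. This way the pairing $k\mapsto\hat z_k$ is produced by the Rouché count on each circle without ever forcing a constructive decision on whether any two roots collide.
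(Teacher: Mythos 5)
The first half of your argument---localizing the roots via the Cauchy bound, using the product formula to get $|P(z)|\ge\varepsilon^n$ on $\bar B(0,R)\setminus\bigcup_k B(z_k,\varepsilon)$, bounding $|\hat P-P|\le nR^{n-1}\delta$, and deducing an explicit $\delta(\varepsilon)$ that confines all roots of $\hat P$ to $\bigcup_k B(z_k,\varepsilon)$---is correct, and it is arguably cleaner than the paper's, which clusters the roots into $m$ pairwise disjoint closed balls of radius $n\varepsilon$ and then takes $\delta$ below each $s_j/p_j$ with $s_j=\inf_{\mathcal S(c_j,n\varepsilon)}|P|$. Your closed-form bound avoids those implicit infima.

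The Rouch\'e step, however, has a real gap. The circle $\partial B(z_k,\varepsilon')$ with $\varepsilon'\le\varepsilon$ lies entirely inside $\bar B(z_k,\varepsilon)$, i.e.\ exactly in the region where your bound $|P|\ge\varepsilon^n$ was \emph{not} established, so the claim that ``the estimate above still delivers $|Q|<|P|$'' on that circle does not follow. On such a circle $|P(z)|=\prod_i|z-z_i|$ can be far smaller than $\varepsilon^n$: one factor is only $\varepsilon'$, and if some other root $z_j$ lies within $2\varepsilon$ of $z_k$ the circle can pass arbitrarily close to $z_j$ too. Choosing $\varepsilon'$ so the circle merely avoids the roots does not supply the quantitative inequality $|Q|<|P|$ required by Lemma~\ref{lemma-Rouche-poly}, because $\delta$ was already fixed against $\varepsilon^n$ rather than against whatever (possibly much smaller) infimum $|P|$ attains on that circle. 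The repair is essentially what the paper does: group nearby roots into finitely many pairwise disjoint closed balls whose boundary spheres keep a definite positive distance from \emph{every} root, so that $\inf|P|$ on each sphere is an explicit positive number, and choose $\delta$ to beat all of these before invoking Rouch\'e on each sphere. Note this grouping is carried out on rational approximations of the roots, so---contrary to your stated worry---it never requires deciding whether two roots coincide. As written, your argument establishes that every root of $\hat P$ is $\varepsilon$-close to some root of $P$, but not the matching-with-multiplicity direction needed for the claimed pairing.
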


\begin{proof}
	By the FTA, $P(z)$ has a multiset of $n$ roots $\lmultiset z_1, \ldots z_n \rmultiset$. By approximating $z_1, \ldots z_n$ sufficiently, we can find some $m$ disjoint closed balls $\{ \bar{\mathcal{B}}(c_j, n \varepsilon) \}_j$ of radius $n \varepsilon$ each containing some $m_j$ points of $\lmultiset z_1, \ldots z_n \rmultiset$, and such that no root is on the boundary of the respective ball. 
		Let 
	\begin{equation*}
		s_j: = \inf_{\mathcal{S}(c_j, n\varepsilon)} |P(z)|, j=1, \ldots m.
	\end{equation*}
	
	Notice that infima and suprema of polynomials on $\mathcal{S}(c_j, n \varepsilon)$ exist since $\mathcal{S}(c_j, n \varepsilon)$ are compact sets. Further, let
	\begin{equation*}
		p_j:= \sup_{\mathcal{S}(c_j, n \varepsilon)}  \left\{ 1 + \sum_{k=1}^{n-1}|z^{n-k}| \, \right\} = \sup_{\mathcal{S}(c_j, n \varepsilon)}  \left\{1+|z^{n-1}| + |z^{n-1}|+\dots + |z|\, \right\}.
	\end{equation*}	
	
	Let $\delta$ be such that $\forall j = 1, \dots m \left[ \delta < \frac{s_j}{p_j} \right]$ and $\forall k = 1, \ldots n-1 \left[ |a_k - \hat{a}_k | \le \delta \right]$. Then, it follows that if $z \in \{ \bar{\mathcal{B}}(c_j, n \varepsilon) \}_j$:
	\begin{equation*}
	    \begin{split}
	        |\hat{P}(z) - P(z)| = \left |\sum_{k=0}^{n-1}(\hat{a}_k - a_k)z^k\right | \le \sum_{k=0}^{n-1}|\hat{a}_k - a_k||z^k| \le \\
		\delta \sum_{k=0}^{n-1} |z^k| \le \delta p_j < s_j \le |P(z)|
	    \end{split}
	\end{equation*}
	Therefore, $ \forall j = 1, \dots m \left[ z \in \bar{\mathcal{B}}(c_j, n \varepsilon) \implies |P(z)| > | \hat{P}(z) - P(z) | \right]$. By Lemma \ref{lemma-Rouche-poly}, $ \mathcal{N}(P, \mathcal{S}(c_j, n \varepsilon)) = \mathcal{N}(\hat{P}, \mathcal{S}(c_j, n \varepsilon)) = m_j$ since $(\hat{P}(z) - P(z)) + P(z) = \hat{P}(z)$. It follows that for a given $\varepsilon$, there is a $\delta$ that establishes the desired property. Notice that by the density of rational complex numbers, there exists such a polynomial $\hat{P}(z)$ with rational coefficients.
\end{proof}

Using the lemmas in this section, and algebraic closure of rational complex numbers, we are ready to prove the existence of approximate eigenvectors.

\section{Constructive $\varepsilon$--eigenvectors and discussion}

\begin{theorem} \label{thm-eps-eig-vec}
    Let $A$ be a complex-valued $n \times n$ matrix with the characteristic polynomial $P_A(\lambda)$. For any $\varepsilon > 0$, there exist some $m$ linearly independent vectors $\hat{\mathbf{v}}_1, \ldots \hat{\mathbf{v}}_m$ and complex numbers $\hat{\lambda}_1, \ldots \hat{\lambda}_m$ such that:
    \begin{equation*}
	    \forall k = 1, \ldots m  \left [ \| \mathbf{A} \hat{\mathbf{v}}_k - \hat{\lambda}_k \hat{\mathbf{v}}_k \| \leq \varepsilon \right ],
    \end{equation*}   
where $ \forall k = 1, \ldots m$  $\hat{\lambda}_k $ is an approximate eigenvalue of $A$.
\end{theorem}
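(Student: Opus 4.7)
The plan is to reduce the problem to exact linear algebra over the algebraic closure of the rational complex numbers, $\overline{\mathbb{Q}[i]}$, and then use Lemma \ref{lemma-continuity-rts} together with operator-norm continuity to control the resulting approximation errors. First, I would approximate each entry of $\mathbf{A}$ by a rational complex number to obtain a matrix $\hat{\mathbf{A}}$ whose entries lie in $\mathbb{Q}[i]$, with the coordinate-wise approximation quality $\eta > 0$ to be calibrated later. The characteristic polynomial $P_{\hat{\mathbf{A}}}(\lambda)$ then has coefficients in $\mathbb{Q}[i]$, so by the constructive Fundamental Theorem of Algebra combined with the fact that $\overline{\mathbb{Q}[i]}$ is a computable algebraically closed field, its distinct roots $\hat{\lambda}_1, \ldots, \hat{\lambda}_r$ can be produced exactly together with their algebraic multiplicities.

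For each such root $\hat{\lambda}_k$, the matrix $\hat{\mathbf{A}} - \hat{\lambda}_k \mathbf{I}$ has entries in $\overline{\mathbb{Q}[i]}$, where equality is decidable and therefore Gaussian elimination is effective. A basis for its null space, consisting of exact eigenvectors of $\hat{\mathbf{A}}$ normalised to unit length, can thus be constructed. Pooling these bases over all distinct eigenvalues yields $m$ linearly independent unit vectors $\hat{\mathbf{v}}_k$. The key algebraic identity is
\begin{equation*}
\mathbf{A} \hat{\mathbf{v}}_k - \hat{\lambda}_k \hat{\mathbf{v}}_k = (\mathbf{A} - \hat{\mathbf{A}}) \hat{\mathbf{v}}_k + (\hat{\mathbf{A}} - \hat{\lambda}_k \mathbf{I}) \hat{\mathbf{v}}_k = (\mathbf{A} - \hat{\mathbf{A}}) \hat{\mathbf{v}}_k,
\end{equation*}
whose norm is at most $\| \mathbf{A} - \hat{\mathbf{A}} \|$. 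An analogous estimate $|\det(\mathbf{A} - \hat{\lambda}_k \mathbf{I})| = |\det(\mathbf{A} - \hat{\lambda}_k \mathbf{I}) - \det(\hat{\mathbf{A}} - \hat{\lambda}_k \mathbf{I})|$ is controlled by uniform continuity of the determinant on a bounded region containing the $\hat{\lambda}_k$, the latter being bounded in terms of $\|\mathbf{A}\|$ and $\eta$.

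The final quantitative step is to calibrate $\eta$: the operator norm $\| \mathbf{A} - \hat{\mathbf{A}} \|$ is bounded by $n\eta$ via the Frobenius inequality, while the coefficients of $P_A$ depend explicitly and continuously on the entries of $\mathbf{A}$, so in conjunction with Lemma \ref{lemma-continuity-rts} we can pick $\eta$ small enough that simultaneously $\| \mathbf{A} - \hat{\mathbf{A}} \| \le \varepsilon$ and $|\det(\mathbf{A} - \hat{\lambda}_k \mathbf{I})| \le \varepsilon$ for all $k$.

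The main obstacle I anticipate is producing $m$ \emph{linearly independent} unit vectors without invoking decidable equality on entries of the original $\mathbf{A}$: a priori, $\hat{\mathbf{A}}$ may have non-trivial Jordan structure, so at a given algebraic eigenvalue $\hat{\lambda}_k$ the geometric multiplicity can fall short of the algebraic one. The resolution is to read $m$ in the statement as the total geometric multiplicity of $\hat{\mathbf{A}}$, computed exactly as a rank defect over $\overline{\mathbb{Q}[i]}$; this is precisely the number of eigenvectors the null-space computation produces, and it sidesteps any attempt to compute ranks or null spaces of $\mathbf{A}$ directly, which is in general not effective.
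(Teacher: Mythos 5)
Your proof is correct and follows essentially the same route as the paper: rationally approximate $\mathbf{A}$ to obtain $\hat{\mathbf{A}}$ with entries in $\mathbb{Q}[i]$, exploit decidable equality in the algebraic closure to run Gaussian elimination exactly on $\hat{\mathbf{A}} - \hat{\lambda}_k \mathbf{I}$, and bound $\|\mathbf{A}\hat{\mathbf{v}}_k - \hat{\lambda}_k\hat{\mathbf{v}}_k\|$ by $\|\mathbf{A} - \hat{\mathbf{A}}\|$ via the identity $(\mathbf{A} - \hat{\mathbf{A}})\hat{\mathbf{v}}_k + (\hat{\mathbf{A}} - \hat{\lambda}_k\mathbf{I})\hat{\mathbf{v}}_k = (\mathbf{A} - \hat{\mathbf{A}})\hat{\mathbf{v}}_k$. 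Two small points where you are actually more careful than the paper: you explicitly verify the approximate-eigenvalue condition $|\det(\mathbf{A} - \hat{\lambda}_k\mathbf{I})| \le \varepsilon$ via uniform continuity of the determinant (the paper instead invokes Lemma~\ref{lemma-continuity-rts} to get $|\hat{\lambda}_k - \lambda_k| \le \varepsilon$ and leaves the connection to the $\det$ bound implicit), and your reading of $m$ as the total geometric multiplicity of $\hat{\mathbf{A}}$ cleanly resolves an ambiguity the paper waves away with a ``without loss of generality all geometric multiplicities equal one.''
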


\begin{proof}
    Let $P_A(\lambda) = \det \left( \mathbf{A} - \lambda \mathbf{I} \right)$ be the characteristic polynomial of $A$. Notice that $P_A$ is monic of degree $n$ and thus, by the Fundamental Theorem of Algebra, there exists a multiset of $n$ roots $\lmultiset \lambda_1, \ldots \lambda_n \rmultiset$. Let $\hat{\mathbf{A}} = \mathbf{A}_N$ be the $N-$th rational approximation to $\mathbf{A}$. That is, for each entry $a_{jk}$ of $A$, it follows that $| a_{jk} - \hat{a}_{jk} | \le \frac{2}{N}$ where $\hat{a}_{jk}$ is the respective entry of $\hat{\mathbf{A}}$. Since the coefficients of the characteristic polynomial are defined via the finite product and sum of the matrix entries, we can find such an $N$ that, by using Lemma \ref{lemma-continuity-rts}, the roots $\hat{\lambda}_1, \ldots \hat{\lambda}_n$ of $\hat{\mathbf{A}}$ will satisfy $\forall k = 1, \ldots n \left[ | \hat{\lambda}_k - \lambda_k | \le \varepsilon \right]$. Notice that the entries of $\hat{\mathbf{A}}$ are rational complex numbers. Thus, $\hat{\lambda}_1, \ldots \hat{\lambda}_n$ are algebraic numbers. By Lemma 4.1 from \cite[p.~8]{Beeson1980}, we can decide whether $\hat{\lambda}_j = \hat{\lambda}_k$ or $\hat{\lambda}_j \ne \hat{\lambda}_k$ for $j \ne k, j,k = 1, \ldots n$. Suppose that there are $m$ distinct numbers in $\lmultiset \hat{\lambda}_1, \ldots \hat{\lambda}_n \rmultiset$, and their algebraic multiplicities are $l_1, \ldots l_m$. Since
    \begin{equation*}
    	\mathbf{\hat{A}} \mathbf{\hat{v}} = \hat{\lambda}_k \mathbf{\hat{v}}, k = 1, \ldots m  
    \end{equation*}
are systems of linear equations with coefficients being algebraic numbers, we can solve them exactly for $\mathbf{\hat{v}}$ by Gaussian elimination. Notice that these systems of linear equations are consistent, and $\mathbf{\hat{A}} - \hat{\lambda}_k I$ is singular, so there is at least a one-dimensional subspaces of vectors that satisfy the respective system of equations. Therefore, we may assume that $\| \mathbf{\hat{v}} \| = 1$. Notice that a solution  $\mathbf{\hat{v}}$ is a tuple of algebraic numbers. Further, for each $\hat{\lambda}_k$, we can determine the geometric multiplicity, again, by Gaussian elimination on $\mathbf{\hat{A}} - \hat{\lambda}_k I$. If the geometric multiplicity $g_k$ of some $\hat{\lambda_k}$ is greater than one, we can find linearly independent vectors $\mathbf{\hat{v}}_k^{1} \ldots \mathbf{\hat{v}}_k^{g_k}$ all satisfying $\mathbf{\hat{A}} \mathbf{\hat{v}} = \hat{\lambda}_k \mathbf{\hat{v}}$. Without loss of generality, we may assume that all geometric multiplicities are equal to one. At this point, we have a finite set of eigenpairs $( \hat{\lambda_1}, \mathbf{\hat{v}}_1 ), \ldots ( \hat{\lambda}_m, \mathbf{\hat{v}}_m )$ of $\mathbf{\hat{A}}$. For $k = 1, \ldots m$, it follows that:
\begin{equation*}
     \begin{split}
         \| \mathbf{A} \mathbf{\hat{v}}_k -  \hat{ \lambda }_k \mathbf{\hat{v}}_k  \| & = \| \mathbf{A} \mathbf{\hat{v}}_k  +  \mathbf{\hat{A}}  \mathbf{\hat{v}}_k  -  \mathbf{\hat{A}}  \mathbf{\hat{v}}_k  -  \hat{\lambda}_k  \mathbf{\hat{v}}_k  \| \\
        & = \| ( \mathbf{A} - \mathbf{\hat{A}} ) \mathbf{\hat{v}}_k + 0 \| \\
        & = \| ( \mathbf{A} - \mathbf{\hat{A}} ) \mathbf{\hat{v}}_k \| \\
        & \le \| ( \mathbf{A} - \mathbf{\hat{A}} ) \| \| \mathbf{\hat{v}}_k \| \\
        & = \| ( \mathbf{A} - \mathbf{\hat{A}} ) \| \le n \sqrt{n} \varepsilon
     \end{split}
\end{equation*}

    By setting $N$ accordingly, the result follows.
\end{proof}

Note that we cannot prove constructively that the approximate eigenvectors converge to the classical eigenvectors, which implies that we cannot prove existence of eigenvectors in the general case. We now prove another useful inequality. Let $\lambda_j$ be such that $| \lambda_j - \hat{ \lambda }_k | \le \varepsilon$ for some $k$. It follows that:
\begin{equation*}
 \begin{split}
     \| \mathbf{A} \mathbf{\hat{v}}_k -  \lambda_j \mathbf{\hat{v}}_k  \| & =
	\| \mathbf{A} \mathbf{\hat{v}}_k +  \mathbf{\hat{A}}  \mathbf{\hat{v}}_k  -  \mathbf{\hat{A}}  \mathbf{\hat{v}}_k  + \hat{\lambda}_k  \mathbf{\hat{v}}_k -\hat{\lambda}_k  \mathbf{\hat{v}}_k - \lambda_j \mathbf{\hat{v}}_k  \| \\
	& = 	\| \mathbf{A} \mathbf{\hat{v}}_k  -  \mathbf{\hat{A}}  \mathbf{\hat{v}}_k  + \hat{\lambda}_k  \mathbf{\hat{v}}_k - \lambda_j \mathbf{\hat{v}}_k  \| \\
	& \le \| (\mathbf{A}  -  \mathbf{\hat{A}} ) \mathbf{\hat{v}}_k  \| + \| ( \hat{\lambda}_k  - \lambda_j ) \mathbf{\hat{v}}_k  \| \\
	& \le \| \mathbf{A}  -  \mathbf{\hat{A}}  \| \| \mathbf{\hat{v}}_k  \| + \|  \hat{\lambda}_k  - \lambda_j  \| \| \mathbf{\hat{v}}_k  \| \\
	& \le (1 + n \sqrt(n)) \varepsilon
 \end{split}
\end{equation*}

There exists a concept of an $\varepsilon$--eigenvector, introduced by \cite[p.~372]{Bishop1985-constr-analysis},  in the following sense: if $A$ is an operator on a Hilbert space $H$, then a vector $v \in H$ is an $\varepsilon$--eigenvector of $A$ if $\| x \| = 1$ and $ \| A v - (A v, v) v \| \le \varepsilon$, where $(A v, v)$ is the dot product. Observe that in our case, $\mathbf{\hat{A}} \mathbf{\hat{v}}_k = \hat{\lambda}_k \mathbf{\hat{v}}_k$. Transposing both sides and multiplying by $\mathbf{\hat{v}}$, we get:
\begin{equation*}
    (\mathbf{\hat{A}} \mathbf{\hat{v}}_k)^T  \mathbf{\hat{v}}_k = \hat{\lambda}_k \mathbf{\hat{v}}_k^T \mathbf{\hat{v}}_k = \hat{\lambda}_k \| \mathbf{\hat{v}}_k \| = \hat{\lambda}_k.  
\end{equation*}

Therefore, $\| \mathbf{\hat{A}} \mathbf{\hat{v}}_k - (\mathbf{\hat{A}} \mathbf{\hat{v}}_k)^T \mathbf{\hat{v}}_k \| \le n \sqrt{n} \varepsilon$ which means that $\mathbf{\hat{v}}_k$ is an $n \sqrt{n} \varepsilon$--eigenvector of $\mathbf{A}$ in the sense of \cite{Bishop1985-constr-analysis}.

\section{Conclusion and Discussion}

In this note, we studied the computation of eigenvectors from the standpoint of constructive mathematics.
The derived results allow the user to define the bound on the uncertainty $\varepsilon$ of the eigenvectors. The computational content of the Theorem~\ref{thm-eps-eig-vec} is an algorithm that computes a set of unit vectors, \ie\ the $\varepsilon$--eigenvectors. 
The algorithm always outputs a correct result, whereas computation in the context of the exact eigenvectors may suffer from numerical instability. 

In controller design, one can guarantee robustness of computation by giving up the concept of the exact eigenvectors and focusing on $\varepsilon$--eigenvectors instead. When developing a controller that relies on the standard notion of exact eigenvectors (see, \eg, \cite{Hovd1997-SVD-ctrl}), the user may need to manually verify that the outcome of the computation is plausible. Then, if the user suspects a numerical issue, the procedure may need to be repeated using other initial guess or another solution algorithm. In contrast, the advantage of the constructive treatment of eigenvectors and matrix decompositions lies in the fact that the respective constructions are effective, \ie they always and automatically guarantee a correct result in terms of the given specification or predefined precision $\varepsilon$.

In practical applications, an essentially approximate treatment may be considered as a suitable substitute for exact eigenvectors where guaranteeing a predefined precision is essential. 
The derived construction meets the user specification of computation accuracy and might be useful in deriving various matrix normal forms and decompositions in an essentially approximate format. This may be especially important in applications with strict requirements for numerical stability, such as feedback control. Moreover, as formal verification methods gain popularity in control \cite{Bernardeschi2016-proof-assist-ctrl,Araiza-Illan2014-formal-ver-ctrl,Damm07-automating-verification,Gao2014-desciptive-ctrl}, effective computations and formal proofs become more important. We therefore assume that constructive mathematics may provide a coherent answer and appropriate means to assess uncertainty of software implementation of control systems. 

\section*{Acknowledgment}
The authors would like to thank D. Bridges for valuable conversations on the subject of the present work.


\bibliographystyle{unsrt}

\begin{thebibliography}{10}

\bibitem{Hovd1997-SVD-ctrl}
M.~Hovd, R.~D. Braatz, and S.~Skogestad.
\newblock {SVD} controllers for h$_2−$, h$_\infty$− and $\mu$-optimal
  control.
\newblock {\em Automatica}, 33(3):433 -- 439, 1997.

\bibitem{Sou2010-SVD-stab}
K.~C. Sou and A.~Rantzer.
\newblock A singular value decomposition based closed loop stability preserving
  controller reduction method.
\newblock In {\em American Control Conference (ACC), 2010}, pages 1079--1084.
  IEEE, 2010.

\bibitem{Antoulas2005-appr-dyn-sys}
A.~C. Antoulas.
\newblock {\em Approximation of Large-Scale Dynamical Systems}, volume~6.
\newblock SIAM, 2005.

\bibitem{Rojas2004-SVD-horizon-ctrl}
O.~J. Rojas, G.~C. Goodwin, M.~M. Ser\'{o}n, and A.~Feuer.
\newblock An {SVD} based strategy for receding horizon control of input
  constrained linear systems.
\newblock {\em International Journal of Robust and Nonlinear Control},
  14(13-14):1207--1226, 2004.

\bibitem{Zhang1994-SVD-Kalman}
Y.~Zhang, G.~Dai, H.~Zhang, and Q.~Li.
\newblock A {SVD}-based extended kalman filter and applications to aircraft
  flight state and parameter estimation.
\newblock In {\em American Control Conference, 1994}, volume~2, pages
  1809--1813 vol.2, June 1994.

\bibitem{Rohrs1992-lin-ctrl-sys}
C.~E. Rohrs, D.~Schultz, and J.~Melsa.
\newblock {\em Linear Control Systems}.
\newblock McGraw-Hill Higher Education, 1st edition, 1992.

\bibitem{Walker2003-controllers}
D.~M. Walker, G.~Froyland, K.~Judd, and A.~I. Mees.
\newblock Controllers for nonlinear systems using normal forms.
\newblock {\em International Journal of Bifurcation and Chaos},
  13(02):459--465, 2003.

\bibitem{Lay2012-lin-alg}
D.~C. Lay.
\newblock {\em Linear Algebra and Its Applications}.
\newblock Addison-Wesley, 2012.

\bibitem{Turing1936-Halting-problem}
A.~M. Turing.
\newblock On computable numbers, with an application to the
  entscheidungsproblem.
\newblock {\em J. of Math}, 58(345-363):5, 1936.

\bibitem{Neumaier1998-ill-posed-problems}
A.~Neumaier.
\newblock Solving ill-conditioned and singular linear systems: A tutorial on
  regularization.
\newblock {\em SIAM review}, 40(3):636--666, 1998.

\bibitem{Chandrasekaran1991-pertur-thr}
Sh. Chandrasekaran and I.~Ipsen.
\newblock Perturbation theory for the solution of systems of linear equations.
\newblock Technical report, DTIC Document, 1991.

\bibitem{Tikhonov1995-regularization}
A.~N. Tikhonov, A.~V. Goncharsky, V.~V. Stepanov, and A.~G. Yagola.
\newblock {\em Numerical Methods for the Solution of Ill-posed Problems},
  volume 328.
\newblock Springer Science \& Business Media, 1995.

\bibitem{Hansen1998-ill-posed-problems}
P.~C. Hansen.
\newblock {\em Rank-Deficient and Discrete Ill-Posed Problems: Numerical
  Aspects of Linear Inversion}.
\newblock Mathematical Modeling and Computation. Society for Industrial and
  Applied Mathematics, 1998.

\bibitem{Engl1996-regulariz}
H.~W. Engl, M.~Hanke, and A.~Neubauer.
\newblock {\em Regularization of Inverse Problems}.
\newblock Mathematics and Its Applications. Springer Netherlands, 1996.

\bibitem{Trefethen1997-num-lin-alg}
D.~Trefethen, L.~Bau.
\newblock {\em Numerical Linear Algebra}.
\newblock SIAM: Society for Industrial and Applied Mathematics, 1997.

\bibitem{Hecker2007-appr-eig-vec-LS}
David Hecker and Deborah Lurie.
\newblock Using least-squares to find an approximate eigenvector.
\newblock {\em ELA. The Electronic Journal of Linear Algebra [electronic
  only]}, 16:99--110, 2007.

\bibitem{Weihrauch2012-comp-analys}
K.~Weihrauch.
\newblock {\em Computable Analysis: an Introduction}.
\newblock Springer Science \& Business Media, 2012.

\bibitem{Brattka2002-comp-lin-eqns}
V.~Brattka and M.~Ziegler.
\newblock Computability of linear equations.
\newblock In {\em Foundations of Information Technology in the Era of Network
  and Mobile Computing}, pages 95--106. Springer, 2002.

\bibitem{Ziegler2000-comp-dim-lin-subspc}
M.~Ziegler and V.~Brattka.
\newblock Computing the dimension of linear subspaces.
\newblock In {\em SOFSEM 2000: Theory and Practice of Informatics}, pages
  450--458. Springer, 2000.

\bibitem{Mines1988-constr-alg}
R.~Mines, F.~Richman, and W.~Ruitenburg.
\newblock {\em A Course in Constructive Algebra}.
\newblock Springer Science \& Business Media, 1988.

\bibitem{Ye2011-SF}
{F}. {Y}e.
\newblock {\em {Strict Finitism and the Logic of Mathematical Applications.}}
\newblock Springer, 2011.

\bibitem{Bishop1985-constr-analysis}
{E}. {B}ishop and {D}. {B}ridges.
\newblock {\em {C}onstructive Analysis}, volume 279.
\newblock Springer Science \& Business Media, 1985.

\bibitem{Bridges2007-techniques}
{D}.{S}. {B}ridges and {L}.{S}. {V}ita.
\newblock {\em {T}echniques of {C}onstructive {A}nalysis}.
\newblock Universitext. Springer New York, 2007.

\bibitem{Bridges1987-varieties}
D.~Bridges and F.~Richman.
\newblock {\em {V}arieties of Constructive Mathematics}, volume~97.
\newblock Cambridge University Press, 1987.

\bibitem{Beardon1979-compl-analys}
A.~Beardon.
\newblock {\em Complex Analysis: the Winding Number Principle in Analysis and
  Topology}.
\newblock Wiley, New York, 1979.

\bibitem{Beeson1980}
M.~J. Beeson.
\newblock {\em Foundations of Constructive Mathematics: Metamathematical
  Studies}, volume~6.
\newblock Springer Science \& Business Media, 1980.

\bibitem{Bernardeschi2016-proof-assist-ctrl}
C.~Bernardeschi and A.~Domenici.
\newblock Verifying safety properties of a nonlinear control by interactive
  theorem proving with the prototype verification system.
\newblock {\em Information Processing Letters}, 116(6):409 -- 415, 2016.

\bibitem{Araiza-Illan2014-formal-ver-ctrl}
D~Araiza-Illan, K.~Eder, and A.~Richards.
\newblock Formal verification of control systems' properties with theorem
  proving.
\newblock In {\em Control (CONTROL), 2014 UKACC International Conference on},
  pages 244--249, July 2014.

\bibitem{Damm07-automating-verification}
W.~Damm, A.~Mikschl, J.~Oehlerking, E.-R. Olderog, J.~Pang, A.~Platzer,
  M.~Segelken, and B.~Wirtz.
\newblock Automating verification of cooperation, control, and design in
  traffic applications.
\newblock In {\em Formal Methods and Hybrid Real-time Systems. Volume 4700 of
  Lecture Notes in Computer Science}, pages 115--169. Springer, 2007.

\bibitem{Gao2014-desciptive-ctrl}
S.~Gao.
\newblock Descriptive control theory: {A} proposal.
\newblock {\em CoRR}, abs/1409.3560, 2014.

\end{thebibliography}

\end{document}